\theoremstyle{definition}
 \newtheorem{definition}{Definition}[section]
\theoremstyle{plain}
\theoremstyle{plain}
 \newtheorem{theorem}[definition]{Theorem}
\theoremstyle{definition}
\theoremstyle{plain}
 \newtheorem{lemma}[definition]{Lemma}
\theoremstyle{plain}
 \newtheorem{corollary}[definition]{Corollary}
\theoremstyle{remark}
\theoremstyle{definitionnition}
\theoremstyle{plain}
\begin{document}

\title{A Note on the Infinitude of Prime Ideals in Dedekind Domains}
\markright{Infinitude of Prime Ideals in Dedekind Domains}
\author{Jos\'e A. V\'elez-Marulanda}
\address{Department of Mathematics \& Computer Science, Valdosta State University, Valdosta GA 31698}
\email{javelezmarulanda@valdosta.edu}

\maketitle
\begin{abstract}
Let $R$ be an infinite Dedekind domain with at most finitely many units, and let $K$ denote its field of fractions. We prove the following statement. If $L/K$ is a finite Galois extension of fields and $\mathcal{O}$ is the integral closure of $R$ in $L$, then $\mathcal{O}$ contains infinitely many prime ideals. In particular, if $\mathcal{O}$ is further a unique factorization domain, then $\mathcal{O}$ contains infinitely many non-associate prime elements. 
\end{abstract}
\renewcommand{\labelenumi}{\textup{(\roman{enumi})}}
\renewcommand{\labelenumii}{\textup{(\roman{enumi}.\alph{enumii})}}
\numberwithin{equation}{section}
\section{Introduction}
Let $R$ be a integral domain that has at most finitely many units.  We denote by $R^\times$ the set of all units of $R$, i.e., $u\in R^\times$ if and only if there exists $v\in R$ such that $uv=1$. We denote by $R[x]$ the ring of all polynomials in the variable $x$ with coefficients in $R$.  Let $a$ and $b$ be arbitrary elements of $R$. We say that $a$ {\it divides} $b$ in $R$ and write $a|b$, provided that there exits an element $c$ in $R$ such that $b=ac$. We say that $a$ and $b$ are {\it associate} provided that there exists $u$ in $R^\times$ such that $a=ub$. Let $p$ be an element that is neither zero nor a unit in $R$. Recall that  $p$ is said to be {\it prime} if for all elements $r$ and $s$ in $R$ such that $p|rs$ then either $p|r$ or $p|s$. We denote by $K$ the field of fractions of $R$, i.e., $K=\{\frac{p}{q}: p, q\in R \text{ and } q\not=0\}$. Recall that an ideal $\mathfrak{p}$ of $R$ is said to be {\it prime}, provided that for all elements $a,b\in R$, if $ab\in \mathfrak{p}$, then either $a\in \mathfrak{p}$ or $b\in \mathfrak{p}$.
It follows from \cite[Exercise \S 1-1.8]{kaplansky} that $R$ contains infinitely many prime ideals.  If $R$ is further a unique factorization domain, then it follows from \cite[Proposition 2.2 (ii)]{velezcaceres} that $R$ contains infinitely many non-associate prime elements. 
Assume that $R$ is a subring of a ring $L$. Recall that an element $\alpha\in L$ is \textit{integral} over $R$ if there exists a monic polynomial $f\in R[x]$ such that $f(\alpha)=0$. In particular, when 
$R=\mathbb{Z}$, the element $\alpha$ is said to be an \textit{algebraic integer} in $L$. It is a well-known result that the set $B$ consisting of all the elements that are integral over $R$ is a ring, which is called the \textit{integral closure} of $R$ in $L$ (see e.g. \cite[Corollary 5.3]{atiyah}).  In particular,  if $R=\mathbb{Z}$ and $L$ is a field containing $\mathbb{Z}$, the integral closure of $\mathbb{Z}$ in $L$  is called the \textit{ring of integers} of $L$, and we denote this ring by $\mathcal{O}_L$. For example, let $d$ be a square-free integer and consider $\mathbb{Q}(\sqrt{d})=\{a+b\sqrt{d}: a, b\in \mathbb{Q}\}$.  The ring of integers in $\mathbb{Q}(\sqrt{d})$ is 
\begin{equation}\label{w}
\mathcal{O}_{\mathbb{Q}(\sqrt{d})}=\mathbb{Z}[\omega]=\{a+b\omega: a, b\in \mathbb{Z}\},
\end{equation}
where 
\begin{equation*}
\omega=\begin{cases} \sqrt{d}, &\text{ if $d\equiv 2, 3\mod 4$}\\
\frac{1+\sqrt{d}}{2}, &\text{ if $d\equiv 1\mod 4$}\end{cases}
\end{equation*}
We say that  $R$ is \textit{integrally closed} if $R$ is equal to its integral closure in its field of fractions. In particular, $\mathbb{Z}$ is integrally closed.  

For example, let $d\in \{-1,-2,-3,-7,-11,-19,-43,-67,-163\}$ and consider the ring of integers $\mathcal{O}_{\mathbb{Q}(\sqrt{d})}$ of $\mathbb{Q}(\sqrt{d})$. By \cite[Theorem 13.2.5]{artin}, $\mathcal{O}_{\mathbb{Q}(\sqrt{d})}$ is an infinite unique factorization domain with finitely many units, which implies that there are infinitely many non-associative prime elements in $\mathcal{O}_{\mathbb{Q}(\sqrt{d})}$. This gives rise the discussion of the infinitude of prime elements in rings such as $\mathcal{O}_{\mathbb{Q}(\sqrt{2})}=\mathbb{Z}[\sqrt{2}]$. Note that $\mathcal{O}_{\mathbb{Q}(\sqrt{2})}$ is a unique factorization domain that contains infinitely many units, for if $\alpha=1+\sqrt{2}$, then for all integers $n\geq 0$, $\alpha^n$ is a unit in $\mathcal{O}_{\mathbb{Q}(\sqrt{2})}$ (see e.g. \cite[\S 13.9]{artin}). In this note, we prove the infinitude of prime elements in $\mathcal{O}_{\mathbb{Q}(\sqrt{2})}$ by proving the following result.

\begin{theorem}\label{thm1.1}
Assume that $R$ is a Dedekind domain. If $L/K$ is a finite Galois extension of fields and $\mathcal{O}$ is the integral closure of $R$ in $L$, then $\mathcal{O}$ contains infinitely many prime ideals. 
\end{theorem}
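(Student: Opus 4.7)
The plan is to combine three standard facts: the cited result that $R$ itself has infinitely many prime ideals, the lying-over theorem for integral extensions, and the finiteness of the set of primes of $\mathcal{O}$ above a given prime of $R$ in a finite extension. Together these force the set of primes of $\mathcal{O}$ to be infinite.

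First I would apply \cite[Exercise \S 1-1.8]{kaplansky} (which was invoked earlier in the introduction under the standing hypotheses that $R$ is infinite and has finitely many units) to conclude that $R$ has an infinite set $\{\mathfrak{p}_i\}_{i\in I}$ of nonzero prime ideals. Next, since $\mathcal{O}$ is the integral closure of $R$ in $L$, the extension $\mathcal{O}/R$ is integral, so by the lying-over theorem (\cite[Theorem 5.10]{atiyah}) every nonzero prime $\mathfrak{p}_i$ of $R$ admits at least one prime $\mathfrak{P}$ of $\mathcal{O}$ with $\mathfrak{P}\cap R=\mathfrak{p}_i$. This already produces a surjective map from the set of nonzero primes of $\mathcal{O}$ onto the set $\{\mathfrak{p}_i\}_{i\in I}$ by $\mathfrak{P}\mapsto \mathfrak{P}\cap R$.

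The remaining step is to verify that the fiber of this map above each $\mathfrak{p}_i$ is finite, so that an infinite base forces an infinite total. Because $L/K$ is a finite Galois extension, the Galois group $\mathrm{Gal}(L/K)$ acts transitively on the primes of $\mathcal{O}$ lying over a given prime of $R$; since $\mathrm{Gal}(L/K)$ is finite, each fiber has at most $[L:K]$ elements. An infinite disjoint union of nonempty sets with uniformly bounded cardinality is still infinite; equivalently, if the set of primes of $\mathcal{O}$ were finite, the image in $R$ would be finite, contradicting the first step.

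The main obstacle I expect is essentially a bookkeeping one: making sure the hypotheses travel correctly from $R$ to $\mathcal{O}$ and that the cited Kaplansky exercise is applied to $R$ (not to $\mathcal{O}$, whose unit group could be infinite, as the discussion of $\mathcal{O}_{\mathbb{Q}(\sqrt{2})}$ shows). The transitivity of the Galois action on primes above a fixed $\mathfrak{p}_i$ is the crucial input that makes the Galois hypothesis do real work, although for mere finiteness of the fibers one could alternatively note that $\mathcal{O}$ is itself a Dedekind domain and that $\mathfrak{p}_i\mathcal{O}$ has only finitely many prime divisors in its unique factorization into prime ideals.
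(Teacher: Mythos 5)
Your proof is correct, but it takes a genuinely different route from the paper. The paper argues by contradiction using the ideal norm $N_{\mathcal{O}/R}$: it factors $\mathfrak{m}\mathcal{O}$ into the (assumed finitely many) primes of $\mathcal{O}$, applies the norm together with the identities $N_{\mathcal{O}/R}(\mathfrak{P})=\mathfrak{p}^{f_{\mathfrak{P}/\mathfrak{p}}}$ and $N_{\mathcal{O}/R}(\mathfrak{a}\mathcal{O})=\mathfrak{a}^n$, and uses unique factorization of ideals in $R$ to conclude that every maximal ideal of $R$ is the contraction of one of the finitely many primes of $\mathcal{O}$, contradicting Lemma \ref{lem2.1}. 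Your argument reaches the same key intermediate fact---surjectivity of the contraction map $\mathfrak{P}\mapsto\mathfrak{P}\cap R$---directly from the lying-over theorem for integral extensions, with no norm machinery at all. Two remarks on what each approach buys. First, your route is strictly more general: lying-over needs only that $\mathcal{O}$ is integral over $R$, so your proof never actually uses the Galois hypothesis (or even finiteness of $L/K$); the Galois structure is what makes the paper's norm $\prod_{\sigma}\sigma(\mathfrak{A})\cap R$ well behaved, so the hypothesis does real work there but is idle in yours. Second, your fiber-finiteness step (via transitivity of the Galois action, or via factoring $\mathfrak{p}_i\mathcal{O}$ in the Dedekind domain $\mathcal{O}$) is superfluous, as you yourself observe at the end: surjectivity alone shows that finitely many primes in $\mathcal{O}$ would force finitely many primes in $R$, which is the contradiction. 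Both proofs ultimately rest on the same input, Lemma \ref{lem2.1} applied to $R$ (and you are right to insist it be applied to $R$ and not to $\mathcal{O}$, whose unit group may be infinite).
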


Recall that $R$ is a {\it Dedekind domain} if  $R$ is Noetherian, $1$-dimensional and integrally closed. If $(R, K, \mathcal{O}, L)$ as in Theorem \ref{thm1.1}, then $\mathcal{O}$ is also a Dedekind domain (see e.g. \cite[Thm. I.6.2]{lorenzini}), and $R$ satisfies the property of unique factorization of ideals, i.e., every non-trivial ideal $\mathfrak{a}$ of $R$ can be written as $\mathfrak{a}= \mathfrak{p}_1\cdots\mathfrak{p}_s$, where for all $i\in \{1,\ldots, s\}$, $\mathfrak{p}_i$ is a prime ideal of $R$, and this factorization is unique up to the order of the factors (see e.g. \cite[Thm. 6.3]{lorenzini}). 

Since the ring $\mathbb{Z}$ is an infinite Dedekind domain that has finitely many units, and since a Dedekind domain is a unique factorization domain if and only if  it is a principal ideal domain, we get the following immediate consequence of Theorem \ref{thm1.1}.

\begin{corollary}\label{cor1.2}
Assume that $L/\mathbb{Q}$ is a finite Galois extension of fields. Then the ring of integers $\mathcal{O}_L$ contains infinitely many prime ideals. In particular, if $\mathcal{O}_L$ is a unique factorization domain, then $\mathcal{O}_L$ contains infinitely many non-associate prime elements. 
\end{corollary}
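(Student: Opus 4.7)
The plan is to invoke Theorem~\ref{thm1.1} with $R=\mathbb{Z}$ and $K=\mathbb{Q}$. First I would verify that $\mathbb{Z}$ meets the hypotheses recorded in the paragraph after Theorem~\ref{thm1.1}: the ring $\mathbb{Z}$ is Noetherian, one-dimensional, and integrally closed (as noted explicitly in the introduction), hence a Dedekind domain; it is infinite; and its unit group $\mathbb{Z}^{\times}=\{\pm 1\}$ is finite. With these checks in place, and with $L/\mathbb{Q}$ a finite Galois extension by hypothesis, Theorem~\ref{thm1.1} applies and yields that the integral closure of $\mathbb{Z}$ in $L$, which is by definition $\mathcal{O}_L$, contains infinitely many prime ideals. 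This disposes of the first assertion.

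For the second assertion, I would use the fact recalled in the paragraph preceding the corollary that a Dedekind domain is a unique factorization domain if and only if it is a principal ideal domain. Thus, if $\mathcal{O}_L$ is assumed to be a UFD, then $\mathcal{O}_L$ is a PID. In any PID, every nonzero prime ideal $\mathfrak{p}$ has the form $\mathfrak{p}=(p)$ for some prime element $p\in \mathcal{O}_L$, and two principal ideals $(p)$ and $(q)$ coincide if and only if $p$ and $q$ are associate. Consequently, the assignment sending a prime element to the principal ideal it generates induces a bijection between the set of associate classes of nonzero prime elements of $\mathcal{O}_L$ and the set of nonzero prime ideals of $\mathcal{O}_L$. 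Combined with the first assertion, this bijection forces the set of associate classes of prime elements to be infinite as well.

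There is essentially no serious obstacle here, since the corollary is explicitly identified as an immediate consequence of Theorem~\ref{thm1.1}; the whole argument is just (a) checking that $\mathbb{Z}$ is an infinite Dedekind domain with finitely many units, and (b) invoking the standard PID correspondence between prime ideals and associate classes of prime elements. The one point to make carefully is that the second conclusion goes through the implication UFD $\Rightarrow$ PID for Dedekind domains, rather than attempting to extract prime \emph{elements} directly from the abstract existence of prime \emph{ideals} without such a hypothesis.
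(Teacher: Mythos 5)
Your proposal is correct and follows exactly the route the paper intends: the paper derives the corollary as an immediate consequence of Theorem~\ref{thm1.1} applied to $R=\mathbb{Z}$ (an infinite Dedekind domain with finitely many units), together with the fact that a Dedekind domain is a UFD if and only if it is a PID, which is precisely your argument. Your write-up merely makes explicit the verification of the hypotheses and the standard PID correspondence between nonzero prime ideals and associate classes of prime elements, both of which the paper leaves implicit.
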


It follows from Corollary \ref{cor1.2} that $\mathcal{O}_{\mathbb{Q}(\sqrt{2})}$ contains infinitely many non-associate prime elements.

For further details concerning integral closures and Dedekind domains, see e.g. \cite[Chapter 5]{atiyah}, \cite[\S VIII.5]{hungerford} and \cite[Chapters I, III \& IV]{lorenzini}.

\section{Proof of the main result}

Let $(R,K, \mathcal{O}, L)$ be as in Theorem \ref{thm1.1}. For all ideals $\mathfrak{A}$ of $\mathcal{O}$, define 
\begin{equation}
N_{\mathcal{O}/R}(\mathfrak{A}):= \left(\prod_{\sigma\in \mathrm{Gal}(L/K)}\sigma(\mathfrak{A})\right)\cap R.
\end{equation}
It follows from \cite[Lemma IV.6.4]{lorenzini} that if $\mathfrak{P}$ is a maximal ideal of $\mathcal{O}$, then $N_{\mathcal{O}/R}(\mathfrak{P})=\mathfrak{p}^{f_{\mathfrak{P}/\mathfrak{p}}}$, where $\mathfrak{p}=\mathfrak{P}\cap R$ and $f_{\mathfrak{P}/\mathfrak{p}}=[\mathcal{O}/\mathfrak{P}: R/\mathfrak{p}]$. Moreover, if $\mathfrak{A}=\mathfrak{P}_1^{s_1}\mathfrak{P}_2^{s_2}\cdots\mathfrak{P}_k^{s_k}$ is a product of maximal ideals of $\mathcal{O}$, then 
\begin{equation}\label{eqn2.2}
N_{\mathcal{O}/R}(\mathfrak{A})=\prod_{i=1}^kN_{\mathcal{O}/R}(\mathfrak{P}_i)^{s_i}.
\end{equation}

Consider the inclusion map
\begin{equation}
\iota_{\mathcal{O}/R}:R\to \mathcal{O}.
\end{equation}
It follows that $\iota_{\mathcal{O}/R}$ induces an injective map between the set $\mathcal{I}(R)$ of all ideals of $R$ to the set $\mathcal{I}(\mathcal{O})$, which we also denote by $\iota_{\mathcal{O}/R}$ and which is defined as $\iota_{\mathcal{O}/R}(\mathfrak{a})=\mathfrak{a}\mathcal{O}$ for all ideals $\mathfrak{a}$ of $R$. It follows from \cite[Lemma IV.6.7]{lorenzini} that if $n=|\mathrm{Gal}(L/K)|$, then for all ideals $\mathfrak{a}$ of $R$,
\begin{equation}\label{eqn2.4}
N_{\mathcal{O}/R}(\iota_{\mathcal{O}/R}(\mathfrak{a}))=\mathfrak{a}^n.
\end{equation}

The following well-known result is an exercise in \cite[Exercise \S 1-1.8]{kaplansky} (for its proof, see e.g. \cite[Lemma 2.1]{velezcaceres}).
\begin{lemma}\label{lem2.1}
Assume that $A$ is infinite integral domain with at most finitely many units. Then $A$ has infinitely many maximal ideals. In particular, $A$ has infinitely many prime ideals.
\end{lemma}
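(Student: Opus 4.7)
The plan is to argue by contradiction, in the spirit of Euclid's classical argument translated into the language of ideals. First I would record that $A$ cannot be a field, since every nonzero element of a field is a unit and an infinite field therefore has infinitely many units; this remark also shows that every maximal ideal of $A$ is nonzero, for otherwise $(0)$ would be maximal and $A$ would be a field.

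Assuming, for contradiction, that $A$ has only finitely many maximal ideals $\mathfrak{m}_1,\ldots,\mathfrak{m}_n$, I would pick a nonzero element $a_i\in\mathfrak{m}_i$ for each $i$ and form $c:=a_1a_2\cdots a_n$, which is nonzero because $A$ is an integral domain. The key observation is that for every $x\in A$ the element $1+xc$ lies in no maximal ideal: if $1+xc\in\mathfrak{m}_i$, then since $xc\in\mathfrak{m}_i$ we would get $1\in\mathfrak{m}_i$, which is absurd. Because every non-unit of $A$ is contained in some maximal ideal, $1+xc\in A^\times$ for all $x\in A$.

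This produces a map $\varphi:A\to A^\times$ defined by $\varphi(x)=1+xc$. The map is injective, because $1+xc=1+yc$ yields $(x-y)c=0$, and the hypothesis that $A$ is a domain together with $c\neq 0$ forces $x=y$. Since $A$ is infinite while $A^\times$ is finite, no such injection can exist, and so $A$ must have infinitely many maximal ideals; as every maximal ideal of a commutative ring with unity is prime, this also gives the ``in particular'' claim. The only real point of care in the argument is ensuring that each $a_i$ can be chosen nonzero, which is why I would dispose of the field case at the outset; after that hurdle the proof is a straightforward Euclid-style construction.
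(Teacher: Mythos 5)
Your proof is correct and is precisely the standard Euclid-style argument behind this result: the paper itself does not prove the lemma inline but cites it as Kaplansky's Exercise \S 1-1.8 with proof in the reference [V\'elez--C\'aceres, Lemma 2.1], and that proof is the same construction you give (product $c$ of nonzero elements of the finitely many maximal ideals, injectivity of $x\mapsto 1+xc$ into the finite unit group). Your explicit disposal of the field case, which guarantees each maximal ideal is nonzero so that the $a_i$ can be chosen nonzero, is exactly the right point of care and completes the argument.
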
 

We conventionally assume that for all non-zero ideals $\mathfrak{a}$ of $R$, $\mathfrak{a}^0=R$. 
\begin{proof}[Proof of Theorem \ref{thm1.1}]
Assume that $\mathcal{O}$ contains at most finitely many prime ideals, i.e., assume that $\mathfrak{N}_1,\ldots,\mathfrak{N}_k$ is a complete list of all distinct prime ideals of $\mathcal{O}$. Let $\mathfrak{m}$ be a maximal ideal of $R$. Since $R$ is infinite and $R$ contains at most finitely many units, it follows that $\mathfrak{m}$ is non-zero. Since $\iota_{\mathcal{O}/R}(\mathfrak{m})$ is an ideal of $\mathcal{O}$, it follows that there exist non-negative integers $s_1,\ldots, s_k$ such that 
\begin{equation}\label{eqn1}
\iota_{\mathcal{O}/R}(\mathfrak{m})=\mathfrak{N}_1^{s_1}\cdots\mathfrak{N}_k^{s_k}.
\end{equation}
If $n=|\mathrm{Gal}(L/K)|$, then after applying $N_{\mathcal{O}/R}$ to (\ref{eqn1}), and using (\ref{eqn2.2}) and (\ref{eqn2.4}), we get 
\begin{align*}
\mathfrak{m}^n&= N_{\mathcal{O}/R}(\iota_{\mathcal{O}/R}(\mathfrak{m}))\\
&=N_{\mathcal{O}/R}(\mathfrak{N}_1^{s_1}\cdots \mathfrak{N}_k^{s_k})\\
&= N_{\mathcal{O}/R}(\mathfrak{N}_1)^{s_1}\cdots  N_{\mathcal{O}/R}(\mathfrak{N}_k)^{s_k}\\
&= \mathfrak{n}_1^{s_1f_{\mathfrak{N}_1/\mathfrak{n}_1}}\cdots \mathfrak{n}_k^{s_kf_{\mathfrak{N}_k/\mathfrak{n}_k}},
\end{align*}
where for all $i\in \{1,\ldots,k\}$, $\mathfrak{n}_i=\mathfrak{N}_i\cap R$.
Since $R$ is a Dedekind domain,  and since for all $i\in \{1,\ldots, k\}$, $\mathfrak{n}_i$ is a prime ideal of $R$, it follows that there exist suitable non-negative integers $r_1, \ldots, r_k$ such that 
\begin{equation}
\mathfrak{m}=\mathfrak{n}_1^{r_1}\cdots \mathfrak{n}_k^{r_k}.
\end{equation}
Since $\mathfrak{m}$ is a maximal ideal of $R$, there exists $i_0\in \{1,\ldots, k\}$ such that $r_{i_0}>0$. It follows that 
\begin{equation}
\mathfrak{m}\subseteq \mathfrak{n}_{i_0}^{r_{i_0}}\subseteq \mathfrak{n}_{i_0}.
\end{equation}
It follows that from maximality of $\mathfrak{m}$ that $\mathfrak{m}=\mathfrak{n}_{i_0}$. Therefore, the set $\mathrm{Max}(R)$ consisting of all maximal ideals of $R$ is contained in the set $\{\mathfrak{n}_1,\ldots, \mathfrak{n}_k\}$, which implies that $\mathrm{Max}(R)$ is finite. However, Lemma \ref{lem2.1} implies that $\mathrm{Max}(R)$ is infinite, which leads to a contradiction. Hence $\mathcal{O}$ contains infinitely many prime ideals.
\end{proof}

From the proof of Theorem \ref{thm1.1}, we obtain the following result.

\begin{corollary}
Let $A$ be a Dedekind domain, and let $Q$ be its field of fractions. Assume that $F/Q$ is a finite Galois extension, and let $B$ the integral closure of $A$ in $F$. If $B$ contains at most finitely many prime ideals, then so does $A$.
\end{corollary}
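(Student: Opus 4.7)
My plan is to reuse the argument from the proof of Theorem \ref{thm1.1} almost verbatim, noting that the two hypotheses on $R$ imposed there ($R$ infinite with at most finitely many units) are invoked only at the very end, when Lemma \ref{lem2.1} is used to derive a contradiction. If one strips that final step away, the rest of the argument proves a purely ideal-theoretic statement: whenever $B$ has only finitely many prime ideals $\mathfrak{N}_1,\ldots,\mathfrak{N}_k$, every nonzero maximal ideal of $A$ must appear among the contractions $\mathfrak{N}_i\cap A$. This is precisely what is needed for the corollary.

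Concretely, I would first dispose of the degenerate case in which $A$ is a field: then $A$ has a single prime ideal and the conclusion is immediate. Otherwise, I would pick an arbitrary maximal ideal $\mathfrak{m}$ of $A$, which is automatically nonzero because $A$ is a one-dimensional integral domain that is not a field. I would then factor $\iota_{B/A}(\mathfrak{m})=\mathfrak{N}_1^{s_1}\cdots\mathfrak{N}_k^{s_k}$ in $B$, apply $N_{B/A}$ by means of the identities (\ref{eqn2.2}) and (\ref{eqn2.4}), and obtain an expression for $\mathfrak{m}^n$ (with $n=|\mathrm{Gal}(F/Q)|$) as a product of powers of the primes $\mathfrak{n}_i=\mathfrak{N}_i\cap A$ in $A$. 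Unique factorization of ideals in the Dedekind domain $A$, together with the maximality of $\mathfrak{m}$, then forces $\mathfrak{m}=\mathfrak{n}_{i_0}$ for some $i_0$, so $\mathrm{Max}(A)\subseteq\{\mathfrak{n}_1,\ldots,\mathfrak{n}_k\}$. Since $A$ is one-dimensional, every prime ideal of $A$ is either $(0)$ or maximal, and hence the set of prime ideals of $A$ is finite.

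The main potential obstacle is essentially illusory, since the corollary is a direct byproduct of the proof of Theorem \ref{thm1.1}. The only delicate points are to disentangle precisely where the hypotheses on $R$ enter that proof, and to remember that the norm-factorization identity requires the contracted maximal ideal to be nonzero, which is why the case where $A$ is a field must be handled separately.
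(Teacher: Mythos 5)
Your proposal is correct and takes essentially the same approach the paper intends: the paper offers no separate argument, saying only that the corollary follows ``from the proof of Theorem \ref{thm1.1},'' and your extraction---dropping the final appeal to Lemma \ref{lem2.1}, which is the only place the hypotheses that $R$ is infinite with finitely many units are used---is exactly that reading. Your extra care in justifying that $\mathfrak{m}\neq 0$ via one-dimensionality (rather than via the infinitude/units hypotheses, as in the theorem's proof) correctly patches the one point the paper leaves implicit.
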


\subsection*{Acknowledgment.}
The author was supported by the Release Time for Research Scholarship of the Office of Academic Affairs at the Valdosta State University. This note is dedicated to the memory of Elizabeth "Lizzie" Lohmar (1993-2014), who was an exceptional student in the B.A. degree in Mathematics program at the Valdosta State University. 

\bibliographystyle{amsplain}
\bibliography{VelezMarulandaBIB}


\vfill\eject

\end{document}